\titleformat{\section}[block]
{\normalfont\Large\filcenter\bfseries}{\thesection.}{.33em}{}
\titleformat{\subsection}[runin]
{\normalfont\normalsize\bfseries}{\thesubsection.}{.33em}{}[.]
\theoremstyle{plain}
\newtheorem{theorem}{Theorem}
\newtheorem{lemma}[theorem]{Lemma}
\newtheorem{proposition}[theorem]{Proposition}
\theoremstyle{definition}
\newtheorem{definition}[theorem]{Definition}
\newtheorem{example}[theorem]{Example}
\theoremstyle{remark}
\newtheorem*{remark}{Remark}
\newcommand{\bbZ}{\mathbb Z}
\newcommand{\mcC}{\mathcal C}
\newcommand{\mcD}{\mathcal D}
\newcommand{\mcI}{\mathcal I}
\newcommand{\mcX}{\mathcal X}
\newcommand{\simp}{\mathsf{s}}
\newcommand{\Cat}{\mathsf{Cat}}
\newcommand{\Set}{\mathsf{Set}}
\newcommand{\sSet}{\simp\Set}
\newcommand{\Ab}{\mathsf{Ab}}
\newcommand{\sAb}{\simp\Ab}
\newcommand{\Ch}{\mathsf{Ch}}
\newcommand{\Mod}{\mathsf{Mod}}
\newcommand{\colim}{\operatorname{colim}}
\newcommand{\const}[1]{\underline{#1}}
\newcommand{\Id}{\operatorname{Id}}
\newcommand{\lcomma}{\mathord{{/}\hspace{-.3em}{/}}}
\renewcommand{\lim}{\operatorname{lim}}
\newcommand{\rH}{\widetilde H}
\newcommand{\op}{\mathrm{op}}
\newcommand{\la}{\leftarrow}
\newcommand{\Ra}{\Rightarrow}
\newcommand{\lRa}{\Longrightarrow}
\newcommand{\xra}[1]{\xrightarrow{#1}}
\newcommand{\horn}[2]{%
\xy
<0pt,-\the\fontdimen22\textfont2>;p+<.1em,0em>:
{\ar@{-}(0,0);(3,7)},
{\ar@{-}(3,7);(5,0)},
{\ar@{-}(3.2,7);(5.2,0)},
{\ar@{-}(3.4,7);(5.4,0)}
\endxy\;\!{}^{#1}_{#2}}
\title{Pushouts of categories, derived limits and colimits\thanks{%
The research was supported by the grant P201/11/0528 of the Czech Science Foundation (GA \v CR).
\vskip .5ex \noindent
2010 \emph{Mathematics Subject Classification}. Primary 18G10; Secondary 18G15.
\vskip .5ex \noindent
\emph{Key words and phrases}. Derived limit, derived colimit, pushout of categories, Mayer--Vietoris sequence.
}}
\author
{Luk\'a\v{s} Vok\v{r}\'{\i}nek}
\begin{document}

\maketitle

\begin{abstract}
In a paper by Ford, it is claimed that to any pushout square of categories with all involved functors injective, there is associated an exact ``Mayer--Vietoris'' sequence of derived (co)limits. We provide a counter-example to this general statement. Further, we construct the Mayer--Vietoris sequence under some restrictions that cover the well-known case of a pushout square of group monomorphisms.
\end{abstract}

\section{Introduction}

In this paper, we are interested in derived functors of the colimit functor $\colim^\mcC \colon [\mcC,\Ab] \to \Ab$, defined on the category $[\mcC,\Ab]$ of functors $\mcC \to \Ab$; we denote them by $\colim^\mcC_n = L_n \colim^\mcC$.

We study the dependence of these derived colimits on the indexing category $\mcC$. More concretely, assume that $\mcC$ is a pushout of two subcategories $\mcC_1 ,\, \mcC_2 \subseteq \mcC$ along their common subcategory $\mcC_0$. It was claimed in \cite{Ford} that in this situation, there exists, for $M \colon \mcC \to \Ab$, an exact \emph{Mayer--Vietoris sequence}
\[\cdots \to \colim^{\mcC}_{n+1} M \xra\partial \colim^{\mcC_0}_n M \to \colim^{\mcC_1}_n M \oplus \colim^{\mcC_2}_n M \to \colim^{\mcC}_n M \xra\partial \colim^{\mcC_0}_{n-1} M \to \cdots,\]
where the derived colimits are taken of the restriction of $M$ to the various subcategories of $\mcC$ and where the maps are induced by the inclusions among the subcategories, with the exception of those denoted by $\partial$. Another Mayer--Vietoris sequence was claimed for derived limits. The statements do not hold in this generality and we provide a simple counter-example in Section~\ref{s:counter-example}.

After that, we prove the following restricted version of the theorem.

\begin{theorem}\label{t:main}
Let
\begin{equation}\label{eq:Cat}\tag{Cat}
\xymatrix@C=30pt{
\mcC_0 \ar[r]^-{F_1} \ar[d]_-{F_2} & \mcC_1 \ar[d]^-{I_1} \\
\mcC_2 \ar[r]_-{I_2} & \mcC
}\end{equation}
be a pushout square of categories with at least one of $F_1$, $F_2$ injective on objects and all $I_1$, $I_2$ and $I_1F_1=I_2F_2$ \emph{local coverings}. Then an exact Mayer--Vietoris sequence associated with this square exists.
\end{theorem}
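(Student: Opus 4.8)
The plan is to reduce Theorem~\ref{t:main} to the existence of a short exact sequence of coefficient systems on $\mcC$, pass to its long exact sequence of derived colimits, and identify the terms using the local covering hypothesis.

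Recall that $\colim^\mcC_n$ are the left derived functors of $\colim^\mcC\colon[\mcC,\Ab]\to\Ab$ (so $\colim^\mcC_n M$ is the $n$-th homology of $\mcC$ with coefficients in $M$, computable from the bar construction), and that $\colim^\mcC\circ(I_i)_!=\colim^{\mcC_i}$ for the left Kan extension $(I_i)_!$ along $I_i\colon\mcC_i\to\mcC$, and $\colim^\mcC\circ(I_0)_!=\colim^{\mcC_0}$ for $I_0:=I_1F_1=I_2F_2$. One first checks, directly from the definition, that a local covering $u$ has vanishing higher left derived Kan extensions $L_qu_!$ ($q>0$) --- equivalently, that its comma categories $u/b$ have homology concentrated in degree $0$ with arbitrary coefficients. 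Granting this, the Grothendieck spectral sequence for the composites above degenerates and yields natural isomorphisms $\colim^\mcC_n\big((I_i)_!G\big)\cong\colim^{\mcC_i}_n G$ for $i=1,2$, and likewise for $I_0$. Writing $N_i:=(I_i)_!\,I_i^*M$ ($i=0,1,2$), we obtain $\colim^\mcC_n N_i\cong\colim^{\mcC_i}_n(I_i^*M)$.

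Next I would construct the short exact sequence of functors $\mcC\to\Ab$
\[0\longrightarrow N_0\xlra{(\iota_1,\,-\iota_2)}N_1\oplus N_2\xlra{(\varepsilon_1,\,\varepsilon_2)}M\longrightarrow 0,\]
with $\varepsilon_i\colon(I_i)_!I_i^*M\to M$ the counit of $(I_i)_!\dashv I_i^*$ and $\iota_i\colon N_0\to N_i$ induced by the counit $(F_i)_!F_i^*\Rightarrow\Id$ via the identification $N_0=(I_i)_!(F_i)_!F_i^*I_i^*M$. Both composites $\varepsilon_i\iota_i$ equal the counit $(I_0)_!I_0^*M\to M$, so the sequence is a complex; granting exactness, its long exact sequence of derived colimits over $\mcC$, rewritten through the isomorphisms $\colim^\mcC_n N_i\cong\colim^{\mcC_i}_n(I_i^*M)$, is exactly the Mayer--Vietoris sequence claimed, the connecting homomorphisms furnishing the maps $\partial$. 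The derived-limit version follows dually, from the dual short exact sequence $0\to M\to(I_1)_*I_1^*M\oplus(I_2)_*I_2^*M\to(I_0)_*I_0^*M\to 0$ built from the units of the right Kan extensions together with the dual acyclicity property $R^qu_*=0$ ($q>0$) of local coverings.

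The main obstacle is the exactness of the displayed sequence, which must be checked after evaluation at each $c\in\mcC$. There $N_i(c)=\colim_{I_i/c}(M\circ\mathrm{pr})$ while $M(c)=\colim_{\mcC/c}(M\circ\mathrm{pr})$, since $\mcC/c$ has the terminal object $(c,\mathrm{id})$; so the evaluated sequence is controlled by the comma categories and the inclusions $I_0/c\to I_i/c\hookrightarrow\mcC/c$. The object functor preserves pushouts, so $c$ lies in the image of $I_1$ or of $I_2$, whence $(\varepsilon_1,\varepsilon_2)$ is surjective at $c$; also, since one of $F_1,F_2$ is injective on objects, both $I_1$ and $I_2$ are injective on objects. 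The local covering hypothesis then collapses each $I_i/c$ to its set of connected components, each contributing a single value of $M$, so the rest of the evaluated sequence becomes a complex assembled from the sets $\pi_0(I_i/c)$ and values of $M$; its exactness reduces to the assertions that $\pi_0(I_1/c)\sqcup_{\pi_0(I_0/c)}\pi_0(I_2/c)$ is the one-point set $\pi_0(\mcC/c)$ and that $\bbZ[\pi_0(I_0/c)]\to\bbZ[\pi_0(I_1/c)]\oplus\bbZ[\pi_0(I_2/c)]$ is injective. The first holds because every morphism of $\mcC$ is a composite of morphisms lifting to $\mcC_1$ or $\mcC_2$, which connects every object of $I_i/c$ to the component of $(c,\mathrm{id})$; the second is the delicate point and uses the full strength of both hypotheses. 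As a check, for group monomorphisms, where $\mcC=G_1*_{G_0}G_2$ and $N_i\cong\bbZ[G/G_i]\otimes_\bbZ M$, the displayed sequence is the augmented simplicial chain complex of the Bass--Serre tree tensored with $M$, exact since the tree is contractible; the counter-example of Section~\ref{s:counter-example} shows that without hypotheses of this kind no such sequence can exist.
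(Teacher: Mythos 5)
Your overall strategy---produce a short exact sequence of coefficient systems whose long exact sequence of derived colimits is the Mayer--Vietoris sequence---is a legitimate reformulation, and you correctly locate the crux in the exactness of that sequence at each object $c$. But there are two genuine problems. The first is a variance mismatch: you build $N_i=(I_i)_!I_i^*M$ from left Kan extensions of \emph{left} modules, so the relevant comma categories are the over categories $I_i/c$, whereas the paper's local covering hypothesis is a condition on the \emph{under} categories $c/I_i$ (these are what control the left Kan extension of the \emph{right} module $\const\bbZ$ used to resolve $\colim^\mcC$). Worse, even for the correct comma categories, homotopy discreteness of the nerve does \emph{not} imply vanishing of higher derived Kan extensions with arbitrary coefficients: the span category $\bullet\la\bullet\to\bullet$ has contractible nerve but nonzero $\colim_1$ for suitable functors, since derived colimits with functor coefficients are not invariants of the nerve. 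So the step ``one first checks, directly from the definition, that a local covering has vanishing $L_qu_!$'' is false as stated. Both issues are repaired by dualizing: prove instead exactness of the sequence of right modules $0\to(I_0)_!\const\bbZ\to(I_1)_!\const\bbZ\oplus(I_2)_!\const\bbZ\to\const\bbZ\to0$, where the local covering hypothesis does give that $\bbZ N(-/I_k)$ is a projective resolution of $(I_k)_!\const\bbZ$ (only constant coefficients are needed), so that $\operatorname{Tor}^\mcC_n((I_k)_!\const\bbZ,M)\cong\colim^{\mcC_k}_nI_k^*M$ by the paper's Lemma~\ref{l:derived_colim_under}.

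Second, and more seriously, the one statement you flag as ``the delicate point''---injectivity of $\bbZ[\pi_0]\to\bbZ[\pi_0]\oplus\bbZ[\pi_0]$---is left unproved, and together with your $\pi_0$-pushout claim it is exactly the content of the theorem: it says the graph with vertices $\pi_0(c/I_1)\sqcup\pi_0(c/I_2)$ and edges $\pi_0(c/I_0)$ is a tree, i.e.\ that the homotopy pushout of the three homotopy discrete spaces $N(c/I_k)$ is simply connected as well as connected. This is precisely where the hypothesis that one of $F_1,F_2$ is injective on objects must enter, and it is not formal: pushouts of categories are poorly behaved, and even your connectivity argument (``every morphism of $\mcC$ is a composite of morphisms lifting to $\mcC_1$ or $\mcC_2$'') tacitly uses a description of the pushout that needs justification. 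The paper handles both $\pi_0$ and $\pi_1$ at once by localizing $\Cat$ and $\sSet$ at groupoids and homotopy $1$-types respectively: the square of under categories is again a pushout, injectivity on objects makes one leg a cofibration in $\Cat_{\mathrm{loc}}$, and left properness gives that the square of nerves is homotopy cocartesian after $P_1$; combined with the $1$-dimensionality coming from homotopy discreteness this yields exactly your tree statement. Without a substitute for that lemma the proof is incomplete at its central step.
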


Local coverings will be defined later in Section~\ref{s:local_coverings}. They are generalizations of monomorphisms of groups. In particular, the above theorem generalizes the well-known special case of groups, see~\cite[1B.12]{Hatcher}.

\section{Preliminaries on derived limits and colimits}

Let $\mcC$ be a small category. We denote by $\mcC{-}\Mod$ the category of functors $\mcC \to \Ab$, also called \emph{left $\mcC$-modules}. Dually, $\Mod{-}\mcC$ is the category of \emph{right $\mcC$-modules}, i.e.\ functors $\mcC^\op \to \Ab$. It is well-known that $\mcC{-}\Mod$ has enough projectives and injectives and thus, one may define derived functors. In this paper, we are interested in the left derived functors $\colim^\mcC_n=L_n\colim^\mcC$ of the colimit functor $\colim^\mcC \colon \mcC{-}\Mod \to \Ab$ and right derived functors $\lim_{\mcC^\op}^n=R^n\lim_{\mcC^\op}$ of the limit functor $\lim_{\mcC^\op} \colon \Mod{-}\mcC \to \Ab$.

We give a concrete model for derived colimits based on the tensor product of $\mcC$-modules. We ignore the dual situation of derived limits completely, leaving it to the reader. For $L \in \Mod{-}\mcC$ and $M \in \mcC{-}\Mod$, we define the \emph{tensor product} $L \otimes_\mcC M$ to be the coequalizer of the action of $\mcC$ on the two factors of the tensor product,
\[\xymatrix@C=3pc{
	\bigoplus\limits_{c_0,c_1 \in \mcC} Lc_1 \otimes \bbZ\mcC(c_0,c_1) \otimes Mc_0 \ar@<.2pc>[r] \ar@<-.2pc>[r] & \bigoplus\limits_{c\in\mcC} Lc \otimes Mc
}\]
with the two maps sending $x \otimes f \otimes y$ to $xf\otimes y$ and to $x \otimes fy$.

In this way, the colimit functor can be seen as a tensor product $\colim^\mcC M \cong \underline\bbZ \otimes_\mcC M$, where $\underline\bbZ$ denotes the constant diagram with the value $\bbZ$. By standard arguments, the derived colimit is defined by replacing either of $\underline\bbZ$ and $M$ by a projective resolution. There is a canonical choice of a resolution of $\underline\bbZ$, given by $\bbZ N({-}/\mcC)$, the chain complex associated with the diagram of the nerves of the under categories $c/\mcC$. We will thus use as a definition
\[\colim^\mcC_n M = H_n(\bbZ N(-/\mcC) \otimes_\mcC M).\]

Now, we discuss the dependence of the $\colim_n$'s on the indexing category. For each functor $F \colon \mcD \to \mcC$, there is an induced restriction functor $F^* \colon \Mod{-}\mcC \to \Mod{-}\mcD$; it admits both left and right adjoints, the Kan extension functors. Concretely, the left adjoint $F_! \colon \Mod{-}\mcD \to \Mod{-}\mcC$ is given by
\[F_!L(c)=L \otimes_\mcD \bbZ\mcC(c,F{-}).\]

\begin{lemma}\label{l:tensor_Kan}
The following formula holds
\[F_!L \otimes_\mcC M \cong L \otimes_\mcD F^*M.\]
\end{lemma}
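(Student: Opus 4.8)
The plan is to prove the isomorphism by writing both sides as the coequalizer of an explicit diagram of abelian groups, and then exhibiting a natural isomorphism between these two coequalizer diagrams. First I would recall that for a right $\mcC$-module $P$ and a left $\mcC$-module $M$, the tensor product $P \otimes_\mcC M$ is, by definition, the cokernel of the difference of the two parallel maps in the displayed coequalizer diagram, so that $P \otimes_\mcC M = \bigl(\bigoplus_{c} Pc \otimes Mc\bigr)/\!\sim$, where the relation identifies $xf \otimes y$ with $x \otimes fy$ for $f \in \bbZ\mcC(c_0,c_1)$. I would apply this with $P = F_!L$ on the one side and with the pair $(L, F^*M)$ over $\mcD$ on the other side.

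The key computational step is to unwind the formula $F_!L(c) = L \otimes_\mcD \bbZ\mcC(c,F{-})$. Expanding this, $F_!L(c)$ is itself a coequalizer, so $\bigoplus_c F_!L(c) \otimes Mc$ is an iterated colimit: a cokernel of maps between direct sums indexed by objects of $\mcD$ (and pairs of objects of $\mcD$) and objects of $\mcC$. Concretely, an element of $F_!L(c) \otimes Mc$ is represented by $(x \otimes g) \otimes y$ with $x \in Ld$, $g \in \bbZ\mcC(c, Fd)$, $y \in Mc$, modulo the $\mcD$-action on $x$ and $g$. Using the adjunction-free description, I would note that summing over $c$ and using the $\mcC$-action relation lets one "absorb" $g \in \bbZ\mcC(c,Fd)$ by moving it across the tensor: $(x \otimes g) \otimes y \mapsto x \otimes (gy)$, where now $gy \in M(Fd) = (F^*M)(d)$. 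This should define the candidate map $F_!L \otimes_\mcC M \to L \otimes_\mcD F^*M$; the reverse map sends $x \otimes y'$ with $y' \in (F^*M)(d) = M(Fd)$ to $(x \otimes \mathrm{id}_{Fd}) \otimes y'$ living in $F_!L(Fd) \otimes M(Fd)$.

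I would then check that these two maps are well-defined on the respective coequalizers (i.e.\ compatible with both the $\mcC$- and $\mcD$-action relations) and mutually inverse; both verifications are routine diagram chases on generators once the maps are written down. Alternatively — and this is probably the cleanest route to present — I would avoid elementwise bookkeeping by invoking the general associativity/Fubini property of tensor products of bimodules: $\bbZ\mcC(-,F{-})$ is a $(\mcC,\mcD)$-bimodule, one has $F_!L = L \otimes_\mcD \bbZ\mcC(-,F{-})$ by definition, and then $(L \otimes_\mcD \bbZ\mcC(-,F{-})) \otimes_\mcC M \cong L \otimes_\mcD (\bbZ\mcC(-,F{-}) \otimes_\mcC M)$ by associativity of the tensor product over categories, while $\bbZ\mcC(-,F{-}) \otimes_\mcC M \cong F^*M$ by the co-Yoneda lemma. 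Assembling these three identifications gives the claim. The main obstacle is purely bookkeeping: making sure the co-Yoneda reduction $\bbZ\mcC(-,Fd)\otimes_\mcC M \cong M(Fd)$ is natural in $d$ as a map of left $\mcD$-modules, so that the final tensor over $\mcD$ makes sense; I expect no genuine difficulty here, only care with indices.
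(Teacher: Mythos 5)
Your proposal is correct, and the route you identify as the cleanest --- associativity of the tensor product over categories combined with the co-Yoneda reduction $\bbZ\mcC(-,F{-}) \otimes_\mcC M \cong F^*M$ --- is exactly the paper's own (very short) proof. The explicit elementwise coequalizer argument you sketch first is also fine, just more bookkeeping for the same content.
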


\begin{proof}
By the associativity of the tensor product, we have
\begin{align*}
F_!L \otimes_\mcC M & \cong (L \otimes_\mcD \bbZ\mcC({-},F{-})) \otimes_\mcC M \\
& \cong L \otimes_\mcD (\bbZ\mcC({-},F{-}) \otimes_\mcC M) \cong L \otimes_\mcD MF = L \otimes_\mcD F^*M.\qedhere
\end{align*}
\end{proof}

We will now make use of the above lemma to induce maps on derived colimits. The canonical map $\bbZ N(-/\mcD) \to F^*\bbZ N(-/\mcC)$, given by $F$, has an adjunct $F_!\bbZ N(-/\mcD) \to \bbZ N(-/\mcC)$ and upon tensoring with $M$ yields
\[\bbZ N(-/\mcD) \otimes_\mcC F^*M \cong F_!\bbZ N(-/\mcD) \otimes_\mcC M \to \bbZ N(-/\mcC) \otimes_\mcC M\]
which induces in homology the map $F_* \colon \colim^\mcD_n F^*M \to \colim^\mcC_n M$.

Later, we will need the following simple observations.

\begin{lemma}\label{l:derived_colim_under}
There is an isomorphism
\[\colim^\mcD_n F^*M \cong H_n(\bbZ N(-/F) \otimes_\mcC M),\]
where $c/F$ is the under category of $F$. In addition, $\bbZ N(-/F)$ is a chain complex of projectives.
\end{lemma}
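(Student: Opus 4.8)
The plan is to prove both assertions at once by identifying the complex $F_!\bbZ N(-/\mcD)$ of right $\mcC$-modules with $\bbZ N(-/F)$; the isomorphism of the lemma then drops out of Lemma~\ref{l:tensor_Kan}, and projectivity is visible from the same identification. First I would unwind the degreewise structure. In simplicial degree $n$, the right $\mcD$-module $N(-/\mcD)_n$ is a coproduct of representables: an $n$-simplex of the under-category $d/\mcD$ is precisely an $n$-simplex $\sigma=(e_0\to e_1\to\cdots\to e_n)$ of the nerve $N(\mcD)$ together with a morphism $d\to e_0$, whence $N(-/\mcD)_n\cong\coprod_{\sigma\in N(\mcD)_n}\mcD(-,\sigma_0)$, with $\sigma_0$ the initial vertex of $\sigma$, and $\bbZ N(-/\mcD)_n\cong\bigoplus_{\sigma\in N(\mcD)_n}\bbZ\mcD(-,\sigma_0)$. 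Since $F_!$ is a left adjoint it preserves coproducts, and on a representable it evaluates to $F_!\bbZ\mcD(-,d)=\bbZ\mcD(-,d)\otimes_\mcD\bbZ\mcC({-},F{-})\cong\bbZ\mcC(-,Fd)$ by the co-Yoneda lemma, functorially in $d$. Hence $F_!\bbZ N(-/\mcD)_n\cong\bigoplus_{\sigma\in N(\mcD)_n}\bbZ\mcC(-,F\sigma_0)$. On the other side, an $n$-simplex of $c/F$ is the same data as an $n$-simplex $\sigma$ of $N(\mcD)$ together with a morphism $c\to F\sigma_0$, so $\bbZ N(-/F)_n\cong\bigoplus_{\sigma\in N(\mcD)_n}\bbZ\mcC(-,F\sigma_0)$ as well, and the two are identified degreewise.

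The step I expect to be the main obstacle is checking that these degreewise identifications are compatible with the simplicial operators, so that they assemble into an isomorphism $F_!\bbZ N(-/\mcD)\cong\bbZ N(-/F)$ of simplicial right $\mcC$-modules, hence of the associated chain complexes. This is bookkeeping with the co-Yoneda lemma rather than anything deep: on both sides every face and degeneracy map is induced by the corresponding operator on the indexing simplex $\sigma\in N(\mcD)$, the only case that touches the representable factor being the $0$-th face $d_0$, which passes from $\sigma_0$ to the next vertex $\sigma_1$ and postcomposes with the first arrow of $\sigma$, respectively with its image under $F$; since $F_!$ restricted to representables is nothing but $F$ on objects and morphisms (it extends the composite of $F$ with the Yoneda embedding of $\mcC$), the two actions match, and one only has to make the co-Yoneda identification explicit on generators to see it.

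Granting the identification, the lemma follows. By naturality, Lemma~\ref{l:tensor_Kan} applied to the chain complex $\bbZ N(-/\mcD)$ yields an isomorphism of complexes $\bbZ N(-/\mcD)\otimes_\mcD F^*M\cong F_!\bbZ N(-/\mcD)\otimes_\mcC M$; combined with $F_!\bbZ N(-/\mcD)\cong\bbZ N(-/F)$ this gives $\bbZ N(-/\mcD)\otimes_\mcD F^*M\cong\bbZ N(-/F)\otimes_\mcC M$, and $H_n$ of the left-hand side is $\colim^\mcD_n F^*M$ by definition. For the last assertion, the decomposition $\bbZ N(-/F)_n\cong\bigoplus_{\sigma}\bbZ\mcC(-,F\sigma_0)$ (or its normalized analogue, the free module on the nondegenerate simplices, which is again such a direct sum) exhibits each term of the complex as a direct sum of representable right $\mcC$-modules; a representable $\bbZ\mcC(-,c')$ is projective since $\operatorname{Hom}_{\Mod{-}\mcC}(\bbZ\mcC(-,c'),-)$ is evaluation at $c'$ and is therefore exact, and arbitrary direct sums of projectives are projective. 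Thus $\bbZ N(-/F)$ is a chain complex of projectives.
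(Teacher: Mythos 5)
Your proof is correct and follows essentially the same route as the paper: you identify $F_!\bbZ N(-/\mcD)\cong\bbZ N(-/F)$ (the step the paper dismisses as ``simple to verify'', which you spell out via the degreewise decomposition into representables and co-Yoneda) and then apply Lemma~\ref{l:tensor_Kan}. The only divergence is in the projectivity claim, where the paper notes that $F_!$ preserves projectives because its right adjoint $F^*$ is exact, whereas you exhibit each term of $\bbZ N(-/F)$ directly as a direct sum of representables; both arguments work, and your decomposition is implicitly needed for the paper's version anyway, to know that $\bbZ N(-/\mcD)$ itself is a complex of projectives.
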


\begin{proof}
It is simple to verify that $\bbZ N(-/F) \cong F_! \bbZ N(-/\mcD)$. The first claim thus follows from Lemma~\ref{l:tensor_Kan}. Since $F_!$ is left adjoint to an exact functor $F^*$, it preserves projectives, proving the second claim.
\end{proof}

\section{A counter-example}\label{s:counter-example}

In this section, we present a counter-example to the main theorem of \cite{Ford}. Thus, let \eqref{eq:Cat} be a pushout square in which both functors $F_1$, $F_2$ are injective. In our example, we take $M=\const\bbZ$, the constant functor with value $\bbZ$. In this case, we have $\colim^\mcC_n\const\bbZ \cong H_n(N\mcC;\bbZ)$, the singular homology groups of the nerve of $\mcC$.

Let $\mcC_0$ be an arbitrary connected category for which the reduced homology $\rH_*N\mcC_0$ is non-trivial, e.g.\ $\mcC_0$ could be the group of integers. Then define $\mcC_1$ to be the category obtained from $\mcC_0$ by adjoining a disjoint initial object $0$, while $\mcC_2$ is obtained by adjoining a disjoint terminal object $1$. Then it is easy to see that the pushout $\mcC$ contains both $\mcC_1$ and $\mcC_2$ as full subcategories and there is a unique morphism $0\to 1$, making $0$ an initial object and $1$ a terminal object.

We consider the image of \eqref{eq:Cat} under the nerve functor $N\colon\Cat\to\sSet$. By the above considerations, $N\mcC_1$, $N\mcC_2$ and $N\mcC$ are all contractible, having thus zero reduced (co)homology. The exactness of any kind of Mayer--Vietoris sequence contradicts the triviality of these groups and non-triviality of $\rH_*N\mcC_0$.

\section{Pushouts of categories}

We reformulate the question of the existence of a Mayer--Vietoris sequence in terms of categories and spaces. We consider a square of categories \eqref{eq:Cat} as above and, for any object $c\in\mcC$, form the square of nerves of under categories
\begin{equation}\label{eq:sSet}\tag{sSet-$\mcC$}
\xymatrix@C=30pt{
N(c/I_0) \ar[r]^-{(F_1)_*} \ar[d]_-{(F_2)_*} & N(c/I_1) \ar[d]^-{(I_1)_*} \\
N(c/I_2) \ar[r]_-{(I_2)_*} & N(c/\mcC)
}\end{equation}
where $I_0\colon\mcC_0\to\mcC$ is the composite functor in the original diagram, $I_0=I_1F_1=I_2F_2$. The significance of this diagram for the original problem lies in the following proposition, where we remind that a square is called \emph{homotopy cocartesian} or a \emph{homotopy pushout square} if the induced map from the homotopy pushout to the bottom right corner is a weak equivalence.

\begin{proposition}\label{p:Mayer_Vietoris}
If the square \eqref{eq:sSet} is homotopy cocartesian then \eqref{eq:Cat} induces a Mayer--Vietoris sequence of derived (co)limits.
\end{proposition}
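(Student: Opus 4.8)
The plan is to translate the homotopy cocartesian hypothesis into an exact sequence of chain complexes, and then tensor with $M$ over $\mcC$. First I would recall from Lemma~\ref{l:derived_colim_under} that for each of the four subcategories $\mcC_i$ (with $i=0,1,2$) and inclusion $I_i\colon\mcC_i\to\mcC$, one has $\colim^{\mcC_i}_n M \cong H_n(\bbZ N(-/I_i)\otimes_\mcC M)$, and that $\bbZ N(-/I_i)$ is a complex of projective right $\mcC$-modules; likewise $\colim^\mcC_n M \cong H_n(\bbZ N(-/\mcC)\otimes_\mcC M)$. So the four derived colimits in the desired Mayer--Vietoris sequence are the homologies of the four complexes $\bbZ N(c/I_0)$, $\bbZ N(c/I_1)$, $\bbZ N(c/I_2)$, $\bbZ N(c/\mcC)$ after tensoring over $\mcC$ with $M$.

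Next I would exploit the hypothesis. A homotopy cocartesian square of simplicial sets, after applying the free abelian group functor $\bbZ(-)$ and taking chains, gives a short exact sequence of chain complexes
\[0 \to \bbZ N(c/I_0) \to \bbZ N(c/I_1) \oplus \bbZ N(c/I_2) \to \mathrm{cone}\bigl(\bbZ N(c/I_0)\to\bbZ N(c/I_1)\oplus\bbZ N(c/I_2)\bigr)\to 0,\]
where the cone is quasi-isomorphic to $\bbZ N(c/\mcC)$ precisely because the square is homotopy cocartesian (the homotopy pushout of a square of simplicial sets is modeled on chains by exactly this mapping cone, and "homotopy cocartesian" says the comparison map to $N(c/\mcC)$ is a weak equivalence, hence a $\bbZ$-homology isomorphism). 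Alternatively, and perhaps more cleanly, I would argue that since each $\bbZ N(c/I_i)$ is degreewise free and the square of simplicial sets is homotopy cocartesian, the total complex of the evident double complex
\[\bbZ N(c/I_0) \to \bbZ N(c/I_1)\oplus\bbZ N(c/I_2) \to \bbZ N(c/\mcC)\]
is acyclic as a complex of right $\mcC$-modules — this is the Mayer--Vietoris "sheaf" statement, valid objectwise in $c$ and hence as an exact sequence in $\Mod{-}\mcC$. All the terms are complexes of projectives.

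Now I would tensor this acyclic complex of projectives with $M$ over $\mcC$. Since tensoring with $M$ over $\mcC$ is right exact and the terms are projective (hence flat) right $\mcC$-modules, acyclicity is preserved: one obtains a short exact sequence of chain complexes of abelian groups
\[0 \to \bbZ N(-/I_0)\otimes_\mcC M \to \bigl(\bbZ N(-/I_1)\oplus\bbZ N(-/I_2)\bigr)\otimes_\mcC M \to \bbZ N(-/\mcC)\otimes_\mcC M \to 0,\]
up to quasi-isomorphism on the right-hand term. The associated long exact sequence in homology, combined with the identifications from Lemma~\ref{l:derived_colim_under} and the fact (established just before the lemma, via Lemma~\ref{l:tensor_Kan}) that the maps induced by the inclusions are exactly the $(I_i)_*$, is the asserted Mayer--Vietoris sequence for derived colimits; the connecting homomorphisms are the maps $\partial$. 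The dual argument with $\bbZ N({-}\backslash\mcC)$, over-categories replaced by under-categories and tensor replaced by Hom, yields the limit version, and I would simply assert this by duality.

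I expect the main obstacle to be the precise homological algebra linking "homotopy cocartesian square of simplicial sets" to "the total complex of the associated square of normalized chain complexes is acyclic objectwise." One must be careful that homotopy cocartesian is exactly the condition needed — it is not automatic for an arbitrary commuting square, and the counter-example in Section~\ref{s:counter-example} shows that without it the sequence genuinely fails. The cleanest route is: the square of simplicial sets is homotopy cocartesian iff the comparison map $\mathrm{hocolim} \to N(c/\mcC)$ is a weak equivalence, the homotopy colimit of a square is the double mapping cylinder, $\bbZ(-)$ sends this to a complex quasi-isomorphic to the mapping cone of $\bbZ N(c/I_0)\to\bbZ N(c/I_1)\oplus\bbZ N(c/I_2)$, and a weak equivalence of simplicial sets induces an isomorphism on integral homology — so the cone is quasi-isomorphic to $\bbZ N(c/\mcC)$, which is exactly the acyclicity of the total complex. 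Once this objectwise statement is in hand, everything else is a formal consequence of flatness of projectives and the long exact homology sequence.
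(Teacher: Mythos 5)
Your argument is correct and follows essentially the same route as the paper: identify the derived (co)limits via Lemma~\ref{l:derived_colim_under}, use that the chain functor $\bbZ$ sends the homotopy cocartesian square \eqref{eq:sSet} to a homotopy cocartesian square of chain complexes of projective right $\mcC$-modules, observe that $-\otimes_\mcC M$ preserves this property (you argue via flatness and acyclicity of the bounded-below total complex; the paper says directly that tensoring a homotopy pushout of complexes of projectives with $M$ gives a homotopy pushout), and read off the long exact sequence. The only slip is your first displayed ``short exact sequence'' --- the genuinely exact one is $0 \to \bbZ N(c/I_1)\oplus\bbZ N(c/I_2) \to \mathrm{cone} \to \bbZ N(c/I_0)[1] \to 0$, since the map out of $\bbZ N(c/I_0)$ need not be injective --- but your alternative formulation via acyclicity of the total complex is the correct one and renders this harmless.
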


\begin{proof}
We will work out the case of derived colimits, the case of limits is dual. Consider the category $\Ch$ of non-negatively graded chain complexes with the projective model structure. The chain complex functor $\bbZ \colon \sSet \to \Ch$ is known to preserve homotopy colimits (the corresponding functor $\bbZ \colon \sSet \to \sAb$ is left Quillen and the Moore complex functor $\sAb \to \Ch$ is naturally quasi-isomorphic to the normalized chain complex functor; the latter is a Quillen equivalence) and therefore, the induced square of chain complexes
\begin{equation}\label{eq:Ch}\tag{Ch-$\mcC$}
\xymatrix@C=30pt{
\bbZ N(c/I_0) \ar[r]^-{(F_1)_*} \ar[d]_-{(F_2)_*} & \bbZ N(c/I_1) \ar[d]^-{(I_1)_*} \\
\bbZ N(c/I_2) \ar[r]_-{(I_2)_*} & \bbZ N(c/\mcC)
}\end{equation}
is also homotopy cocartesian. By Lemma~\ref{l:derived_colim_under}, all $\mcC$-modules appearing in \eqref{eq:Ch} are projective. Tensoring with $M$ over $\mcC$ thus yields another homotopy pushout square of chain complexes.
The homology groups of $\bbZ N(-/\mcC) \otimes_\mcC M$ are the derived colimits $\colim^\mcC_*M$ by definition. For each $k=0,1,2$, the homology groups of the chain complex $\bbZ N(-/I_k) \otimes_\mcC M$ are the derived colimits $\colim^{\mcC_k}_*(I_k^*M)$ by Lemma~\ref{l:derived_colim_under}. Thus, the Mayer--Vietoris sequence associated with the homotopy pushout square $\eqref{eq:Ch} \otimes_\mcC M$ is the required sequence.
\end{proof}

We will need a small lemma regarding the passage from \eqref{eq:Cat} to the induced square of under categories:
\begin{equation}\label{eq:CatC}\tag{Cat-$\mcC$}
\xymatrix@C=30pt{
c/I_0 \ar[r]^-{(F_1)_*} \ar[d]_-{(F_2)_*} & c/I_1 \ar[d]^-{(I_1)_*} \\
c/I_2 \ar[r]_-{(I_2)_*} & c/\mcC
}\end{equation}

\begin{lemma}
If \eqref{eq:Cat} is a pushout square, then so is \eqref{eq:CatC}.
\end{lemma}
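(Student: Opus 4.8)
The plan is to verify the pushout property of \eqref{eq:CatC} objectwise on objects and on morphisms, exploiting the very explicit description of the under category $c/G$ for a functor $G\colon\mcD\to\mcC$: its objects are pairs $(d,u)$ with $d\in\mcD$ and $u\colon c\to Gd$ in $\mcC$, and its morphisms $(d,u)\to(d',u')$ are morphisms $\varphi\colon d\to d'$ in $\mcD$ with $G\varphi\circ u=u'$. Since a pushout of categories is computed from the pushout of the object sets together with the pushout of the morphism sets (suitably describing the generated composition), it suffices to check that the square of object sets and the square of morphism sets underlying \eqref{eq:CatC} are both pushouts in $\Set$, and that the composition is forced. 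I would first record that for the pushout square \eqref{eq:Cat} the underlying square of object sets and of morphism sets are pushouts in $\Set$ — this is the standard description of pushouts in $\Cat$.

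Next I would fix an object $c\in\mcC$ and analyze the object sets of the four under categories. An object of $c/\mcC$ is a pair $(x,u)$ with $x\in\mcC$, $u\colon c\to x$; since $\mathrm{ob}\,\mcC$ is the pushout of $\mathrm{ob}\,\mcC_1\leftarrow\mathrm{ob}\,\mcC_0\to\mathrm{ob}\,\mcC_2$, the object $x$ comes from $\mcC_1$ or $\mcC_2$ (glued along $\mcC_0$), and the data of $u$ is just a morphism in $\mcC$ with fixed source and target. The key point is that $\mathrm{ob}(c/I_k)$ is the preimage in $\mathrm{ob}(c/\mcC)$ of $\mathrm{ob}\,\mcC_k$ under the projection $(x,u)\mapsto x$, so the square of object sets of \eqref{eq:CatC} is obtained from the pushout square $\mathrm{ob}\,\mcC_1\leftarrow\mathrm{ob}\,\mcC_0\to\mathrm{ob}\,\mcC_2$ by pulling back along $\mathrm{ob}(c/\mcC)\to\mathrm{ob}\,\mcC$; since pullbacks along any map preserve pushouts in $\Set$ — more simply, a commutative square of sets whose horizontal (or vertical) maps are the restrictions of a pushout square along a map is again a pushout — this square is a pushout. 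The identical argument applies to the morphism sets: a morphism of $c/\mcC$ is a morphism of $\mcC$ together with the compatibility datum, and $\mathrm{mor}(c/I_k)$ sits inside $\mathrm{mor}(c/\mcC)$ as the preimage of $\mathrm{mor}\,\mcC_k$.

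Finally I would check that the composition in the pushout of \eqref{eq:CatC} agrees with the composition in $c/\mcC$; but this is automatic because composition in each under category is inherited from the base category, composition in $\mcC$ is determined by the pushout, and all four functors in \eqref{eq:CatC} are faithful projections commuting with these compositions. Putting the object and morphism pushout squares together with this observation shows \eqref{eq:CatC} is a pushout in $\Cat$. The only mildly delicate point — and the one I would write out carefully rather than the set-theoretic bookkeeping — is the claim that restricting a pushout square of sets along an arbitrary map (here the projection $c/\mcC\to\mcC$ on objects, resp.\ on morphisms) again yields a pushout; this is elementary but is where the hypothesis ``\eqref{eq:Cat} is a pushout'' is actually used, and it is worth stating cleanly as the crux of the argument.
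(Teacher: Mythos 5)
Your reduction rests on a false premise: that the underlying square of \emph{morphism} sets of a pushout in $\Cat$ is a pushout in $\Set$. The object functor $\mathrm{ob}\colon\Cat\to\Set$ does preserve colimits (it has a right adjoint, the indiscrete category), so your claim about object sets is fine; but the morphism functor $\mathrm{mor}\colon\Cat\to\Set$ does not preserve pushouts. A pushout of categories contains freely generated composites of morphisms alternating between the two pieces, and these need not lie in the image of $\mathrm{mor}\,\mcC_1$ or $\mathrm{mor}\,\mcC_2$. The paper's own counter-example in Section~\ref{s:counter-example} exhibits this: the morphism $0\to 1$ of $\mcC$ is a composite $0\to d\to 1$ and belongs to neither subcategory, so $\mathrm{mor}\,\mcC$ is strictly larger than the set-theoretic pushout of $\mathrm{mor}\,\mcC_1\la\mathrm{mor}\,\mcC_0\to\mathrm{mor}\,\mcC_2$ (pushouts of groups along the trivial group are an even starker instance). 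Consequently the square of morphism sets of \eqref{eq:CatC} is in general \emph{not} a pushout in $\Set$ either (take $c=0$ in that example), even though the lemma is true: the extra morphisms of $c/\mcC$ arise as composites in the categorical pushout of the under categories. There is a second, independent gap: even if both set-level squares were pushouts, that would only show the comparison functor from the $\Cat$-pushout to $c/\mcC$ is surjective on morphisms; injectivity would require controlling the relations imposed in the categorical pushout, which your ``composition is forced'' remark does not address.

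The paper sidesteps all explicit descriptions of the pushout by a purely formal argument: the functor $\Cat/\mcC\to\Cat$ sending $(\mcD\xra{F}\mcC)$ to $c/F$ is a left adjoint (its right adjoint sends $\mcX$ to a lax under category $Y_c\lcomma\mcX$ over $\mcC$), hence preserves all colimits; since the forgetful functor $\Cat/\mcC\to\Cat$ creates colimits, the pushout \eqref{eq:Cat}, viewed in $\Cat/\mcC$, is carried to the pushout \eqref{eq:CatC}. If you want to avoid constructing the right adjoint, you could instead verify the universal property of \eqref{eq:CatC} directly by hand, but any argument that tries to identify the pushout of categories ``levelwise'' on objects and morphisms will fail for the reason above.
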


\begin{proof}
The functor $\Cat/\mcC\to\Cat$, $(\mcD \xra{F} \mcC) \mapsto c/F$, preserves colimits since it is a left adjoint: Let $Y_c\colon\mcC\to\Set\subseteq\Cat$ denote the representable functor $\mcC(c,-)$, thought of as a functor to $\Cat$ by viewing each set as a discrete category. The value of the right adjoint $\Cat \to \Cat/\mcC$ on a category $\mcX$ is given by the ``lax'' under category $Y_c \lcomma \mcX$, whose objects are pairs $(a,\varphi)$ with $a \in \mcC$ and $\varphi \colon Y_c(a) \to \mcX$ and whose morphisms are pairs $(f,\tau)$ with $f \colon a \to b$ and $\tau \colon \varphi \Ra \psi f_*$ a transformation. It is thought of as an object of $\Cat/\mcC$ via $Y_c \lcomma \mcX \to \mcC$, $(a,\varphi) \mapsto a$.
\end{proof}

\begin{remark}
An analogous lemma holds for the Grothendieck construction in $\Cat$: the under category of a Grothendieck construction is a Grothendieck construction of the under categories. Since the nerve functor takes, up to weak homotopy equivalence, Grothendieck constructions to homotopy colimits by \cite{Thomason:hocolim}, one obtains a Mayer--Vietoris sequence for derived colimits over a Grothendieck construction of a diagram $\mcC_1 \la \mcC_0 \to \mcC_2$.

In fact, it is possible to extend the above to more general shapes than just spans of categories, at the cost of replacing the Mayer--Vietoris sequence by a spectral sequence for homology groups of a homotopy colimit of chain complexes. Denoting the Grothendieck construction by $\mcC=\smallint^{i \in \mcI}\mcC_i$, this becomes
\[\colim^\mcI_p (i \mapsto \colim^{\mcC_i}_q M) \lRa \colim^\mcC_{p+q}M.\]
Such a spectral sequence was obtained in a more general context in \cite{Pirashvili}.

However, the main point of our theorem is to simplify the computation of derived colimits over a \emph{given} category $\mcC$ by splitting it into subcategories, a procedure rather inverse to the above.
\end{remark}

\section{Local coverings of categories}\label{s:local_coverings}

There is an obvious candidate for a condition on the square \eqref{eq:Cat} that ensures that \eqref{eq:sSet} is homotopy cocartesian. Namely, by a theorem of Thomason \cite{Thomason:cat_sset} on the Quillen equivalence $\Cat \simeq_Q \sSet$, we may translate this question to $\Cat$ with Thomason's model structure. In a left proper model category such as $\Cat$, a pushout square is homotopy cocartesian if one of the functors $F_1$, $F_2$ is a cofibration. This, however, is too restrictive, since cofibrations in $\Cat$ are rather rare. In particular, this does not apply to the case of a pushout square of group monomorphisms that we would like to generalize.

The main property of group monomorphisms used in the proof of \cite[1B.12]{Hatcher} is that on nerves, they give coverings. We will now generalize this notion from groups to arbitrary categories.

\begin{definition}
A functor $F\colon\mcD\to\mcC$ is said to be a \emph{local covering} if each under category $c/F$ is homotopically discrete, i.e.\ weakly homotopy equivalent to a discrete space.
\end{definition}

\begin{example}
Let $F\colon\mcD\to\mcC$ be a faithful functor from a groupoid $\mcD$ into a category $\mcC$ in which every morphism is epi. Then $F$ is a local covering, as follows from an easily verified fact that each $c/F$ is a groupoid in which every automorphism is the identity.

In particular, any faithful functor between groupoids is a local covering.
\end{example}

\begin{remark}
The condition from the definition is named a local covering since it does not give any (global) information on $F_* \colon N\mcD \to N\mcC$. We say that $F$ is a \emph{covering} if, in addition to the local covering condition, for every arrow $c'\to c$, the induced functor $c/F\to c'/F$ is bijective on $\pi_0$, i.e.\ $N(c/F)\to N(c'/F)$ is a weak homotopy equivalence. Quillen's Theorem~B then shows that the homotopy fibre of $F_* \colon N\mcD \to N\mcC$ is also homotopically discrete and thus, $F_*$ is a covering.

If $F_k$ in \eqref{eq:Cat} is a covering then Quillen's Theorem~B also yields a fibration sequence
\[N(c_k/F_k)\to N(c/I_0)\to N(c/I_k);\]
when $I_k$ is a (local) covering, both the base and the fibre are homotopy discrete, hence also the total space, and $I_0$ is a (local) covering too. We believe that if both $F_1$, $F_2$ are coverings then so are $I_1$, $I_2$ and, by the above, also $I_0$.
\end{remark}

We remind that a \emph{homotopy $1$-type} is a Kan complex $K$ with $\pi_nK=0$ for all $n>1$. It is possible to associate to every simplicial set $X$ a homotopy $1$-type $P_1X$ (the first Postnikov section of $X$) by killing all higher homotopy groups of $X$. For another point of view on this construction, see Proposition~\ref{p:one_type}. The following lemma contains the heart of the proof of Theorem~\ref{t:main}.

\begin{lemma}
For any pushout square of categories \eqref{eq:Cat} with at least one of $F_1$, $F_2$ injective on objects, the map from the homotopy pushout in \eqref{eq:sSet} to its bottom right corner $N(c/\mcC)$ is a $2$-equivalence, i.e.\ the associated map of homotopy $1$-types is a homotopy equivalence.
\end{lemma}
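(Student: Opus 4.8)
The plan is to pass to fundamental groupoids. For a simplicial set $X$ write $\Pi_1 X$ for its fundamental groupoid. Then $P_1X\simeq N\Pi_1X$ naturally (this is the point of view of Proposition~\ref{p:one_type}), so that a map of simplicial sets is a $2$-equivalence precisely when it induces an equivalence of fundamental groupoids. Moreover, for a small category $\mcD$ the groupoid $\Pi_1N\mcD$ is naturally the \emph{localization} of $\mcD$ at all of its morphisms, which I also denote $\Pi_1\mcD$ and regard as the value of the left adjoint $\Pi_1\colon\Cat\to\Gpd$ of the inclusion $\Gpd\hookrightarrow\Cat$. It therefore suffices to produce, compatibly with the comparison map, an equivalence of groupoids between $\Pi_1$ of the homotopy pushout of the span in \eqref{eq:sSet} and $\Pi_1(c/\mcC)$.

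First I would compute the left-hand side. The functor $\Pi_1\colon\sSet\to\Gpd$ is left adjoint to the nerve and is left Quillen for the canonical (``folk'') model structure on $\Gpd$, whose cofibrations are exactly the functors injective on objects; hence it preserves homotopy pushouts --- a form of the van Kampen theorem. So $\Pi_1$ of the homotopy pushout of \eqref{eq:sSet} is the homotopy pushout, formed in $\Gpd$, of the span $\Pi_1(c/I_1)\la\Pi_1(c/I_0)\to\Pi_1(c/I_2)$, where I have used $\Pi_1N(c/I_k)=\Pi_1(c/I_k)$. For the right-hand side, recall from the preceding lemma that \eqref{eq:CatC} is a pushout in $\Cat$; since $\Pi_1\colon\Cat\to\Gpd$ is a left adjoint it preserves this pushout, so $\Pi_1(c/\mcC)$ is the \emph{strict} pushout in $\Gpd$ of the very same span. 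It remains to see that, for this span, the strict and homotopy pushouts agree, and this is exactly where the hypothesis is used: assuming, say, $F_1$ injective on objects, a direct inspection of the objects of the relevant under categories shows that $(F_1)_*\colon c/I_0\to c/I_1$ is injective on objects, hence so is the induced functor $\Pi_1(c/I_0)\to\Pi_1(c/I_1)$, since passing to the localization does not change the set of objects. One leg of the span is therefore a cofibration in $\Gpd$; as the folk model structure is left proper and all of its objects are cofibrant, the strict pushout along such a map is a homotopy pushout. Concatenating the three identifications gives the asserted equivalence, naturally in the comparison map.

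The step I expect to be the main obstacle is keeping the two relevant model structures apart and seeing clearly why injectivity is needed: it is the \emph{strict} pushout of groupoids, produced by the left adjoint $\Pi_1\colon\Cat\to\Gpd$, that computes $\Pi_1(c/\mcC)$, whereas the left-Quillen property of $\Pi_1\colon\sSet\to\Gpd$ only produces the \emph{homotopy} pushout of groupoids, and the hypothesis is precisely what forces the two to coincide. (Without it the lemma genuinely fails: taking $\mcC_0$ the discrete category on two objects and $\mcC_1=\mcC_2=\mcC$ the terminal category, the square \eqref{eq:sSet} at the unique object of $\mcC$ has homotopy pushout a circle while its bottom right corner is a point.) The remaining ingredients --- the identification $P_1X\simeq N\Pi_1X$, the identification of folk cofibrations, left properness of $\Gpd$, and the elementary check that $(F_k)_*$ inherits injectivity on objects --- are standard or routine.
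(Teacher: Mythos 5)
Your proof is correct and is essentially the paper's argument in slightly different clothing: the paper reduces to the Quillen-equivalent localized model structures $\sSet_{\mathrm{loc}}\rightleftarrows\Cat_{\mathrm{loc}}$ and localizes the pushout square of under categories there, while you go directly to $\Gpd$ with the folk model structure via $\Pi_1$ --- but the three key steps (detecting $2$-equivalences after groupoidification, using that groupoidification is a left adjoint to turn \eqref{eq:CatC} into a strict pushout of groupoids, and using injectivity on objects to make one leg a cofibration so that the strict pushout is homotopy cocartesian) are identical. Your closing counterexample showing the necessity of the hypothesis is a nice addition not present in the paper.
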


\begin{proof}
By Proposition~\ref{p:one_type}, there is a Quillen equivalence
\[\xymatrix{
\tau_1 \colon \sSet_\mathrm{loc} \ar@<2pt>[r] & \Cat_\mathrm{loc} \:\!:\! N. \ar@<2pt>[l]
}\]
The condition from the statement is equivalent to the square \eqref{eq:sSet} being homotopy cocartesian in $\sSet_\mathrm{loc}$. This happens if and only if its image under $\tau_1$ is homotopy cocartesian. In the localized model structure, $\tau_1P_1N\mcD \simeq \tau_1N\mcD \cong \mcD$. Thus, we are left to verify that \eqref{eq:CatC} is homotopy cocartesian in $\Cat_\mathrm{loc}$ and for that matter, we localize this square. This produces a pushout square since the localization $\mcD \mapsto \mcD[\mcD^{-1}]$ commutes with colimits (it is a reflection of categories into groupoids). It is known that a pushout square is homotopy cocartesian if one of the maps $(F_1)_*$, $(F_2)_*$ is a cofibration. In $\Cat_\mathrm{loc}$, these are exactly functors that are injective on objects and it is trivial to check that this condition for $F_k$ implies that for $(F_k)_* \colon c/I_0 \to c/I_k$.
\end{proof}

\subsection*{Proof of Theorem~\ref{t:main}}
By definition of local coverings, all spaces appearing in the square \eqref{eq:sSet} are homotopy discrete. Therefore, up to homotopy, its homotopy pushout $P$ is $1$-dimensional and thus a homotopy $1$-type. The contractible $N(c/\mcC)$ is also a homotopy $1$-type and the previous lemma gives $P\simeq N(c/\mcC)$. Hence, $\eqref{eq:sSet}$ is homotopy cocartesian and by Proposition~\ref{p:Mayer_Vietoris}, it induces a Mayer--Vietoris sequence.\qed

\appendix
\section{Homotopy $1$-types and groupoids}\label{s:one_type}

We were not able to find the following statements anywhere explicitly.

\begin{proposition}\label{p:one_type}
There exist left Bousfield localizations of the categorical model structure on $\Cat$ and of the Quillen model structure on $\sSet$ such that
\begin{itemize}[topsep=2pt,itemsep=2pt,parsep=2pt,leftmargin=\parindent,label=$\bullet$]
\item
	local objects in $\Cat$ are exactly the groupoids and the localization is $\mcD \to \mcD[\mcD^{-1}]$,
\item
	local objects in $\sSet$ are exactly homotopy $1$-types and the localization is $X \to P_1X$,
\item
	there is a Quillen equivalence $\xymatrix@1{\tau_1 \colon \sSet_\mathrm{loc} \ar@<2pt>[r] & \Cat_\mathrm{loc} \:\!:\! N \ar@<2pt>[l]}$.
\end{itemize}
\end{proposition}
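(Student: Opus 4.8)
The plan is to construct both localizations by hand and then to recognize that $\tau_1\dashv N$ becomes a Quillen equivalence between them. For $\sSet$ I take the left Bousfield localization of the Quillen model structure at the single cofibration $\partial\Delta^3\hookrightarrow\Delta^3$; this exists since $\sSet$ is left proper and combinatorial, and it is a standard fact about Postnikov approximations that the fibrant objects of this localization are precisely the Kan complexes $X$ with $\pi_nX=0$ for $n>1$, that a functorial fibrant replacement is the first Postnikov section $X\mapsto P_1X$, and hence that the local weak equivalences are exactly the maps inducing isomorphisms on $\pi_0$ and on all $\pi_1$; this disposes of the second bullet. For $\Cat$ I take the categorical model structure — cofibrations the functors injective on objects, fibrations the isofibrations, weak equivalences the categorical equivalences — in which every object is cofibrant ($\emptyset\to\mcD$ is injective on objects) and fibrant ($\mcD\to *$ is an isofibration), so it is left proper; being combinatorial as well, it admits the left Bousfield localization $\Cat_{\mathrm{loc}}$ at the single functor $j\colon[1]\to\mathbb I$ from the arrow category $[1]=\{0\to1\}$ to the free-living isomorphism $\mathbb I$.

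What requires work is to identify the $j$-local objects and the localization functor of $\Cat_{\mathrm{loc}}$. Since all objects of the categorical model structure are bifibrant, the homotopy function complex $\mathrm{Map}(\mcD,\mcD')$ is the nerve of the maximal subgroupoid of the functor category $[\mcD,\mcD']$ — a cylinder on $\mcD$ being $\mcD\times\mathbb I$, so that left homotopies are natural isomorphisms. Applying this to $j$, a category $\mcC$ is $j$-local if and only if the inclusion, into the arrow category of $\mcC$, of the full subcategory spanned by the isomorphisms becomes a weak equivalence after passing to maximal subgroupoids and taking nerves; this inclusion is fully faithful, so it has that property precisely when it is essentially surjective, and an arrow of $\mcC$ is isomorphic, in the arrow category, to an isomorphism exactly when it is itself one. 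Hence the $j$-local objects are exactly the groupoids, which is the first bullet. For the localization functor I take $\eta_\mcD\colon\mcD\to\mcD[\mcD^{-1}]$, the reflection of $\mcD$ into groupoids: its target is a groupoid, hence local; it is the identity on objects, hence a cofibration; and for any groupoid $\mathcal G$ the universal property of the localization gives an isomorphism $[\mcD[\mcD^{-1}],\mathcal G]\cong[\mcD,\mathcal G]$, so $\eta_\mcD$ is a local weak equivalence. Thus $\eta_\mcD$ is a functorial fibrant replacement in $\Cat_{\mathrm{loc}}$; in particular a functor is a $\Cat_{\mathrm{loc}}$-weak-equivalence if and only if the groupoid reflection takes it to an equivalence of groupoids.

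To upgrade $\tau_1\dashv N$ to a Quillen equivalence $\sSet_{\mathrm{loc}}\simeq_Q\Cat_{\mathrm{loc}}$, the point to note is that $\tau_1\dashv N$ is \emph{not} a Quillen pair before localization — the nerve of a non-groupoid is not a Kan complex — so one cannot invoke functoriality of Bousfield localization, and must check left Quillenness of $\tau_1$ directly. Preservation of cofibrations is immediate: $\tau_1$ is cocontinuous and carries each generating cofibration $\partial\Delta^n\hookrightarrow\Delta^n$ to a functor which is the identity on objects for $n\ge2$, and an evident inclusion for $n\le1$, hence to a cofibration of $\Cat$; and localization does not change the cofibrations. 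For trivial cofibrations, let $f\colon X\to Y$ be a cofibration of $\sSet$ which is a local weak equivalence; then $P_1f$ is a weak equivalence, so, since the canonical comparison $X\to N(\tau_1X[(\tau_1X)^{-1}])$ is classically a model for the first Postnikov section, $N$ applied to the groupoid reflection of $\tau_1f$ is weakly equivalent to $P_1f$, hence a weak equivalence; as $N$ reflects equivalences of groupoids, the groupoid reflection of $\tau_1f$ is an equivalence of groupoids, so $\tau_1f$ is a $\Cat_{\mathrm{loc}}$-weak-equivalence by the preceding paragraph, and combined with the cofibration property, $\tau_1f$ is a trivial cofibration. Hence $\tau_1\dashv N$ is a Quillen adjunction of the localized structures. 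It is moreover a Quillen equivalence: the derived counit $\tau_1N\mathcal G\to\mathcal G$ at a groupoid $\mathcal G$ is an isomorphism because $N$ is fully faithful, while the derived unit at a simplicial set $X$ — using that $\eta_{\tau_1X}\colon\tau_1X\to\tau_1X[(\tau_1X)^{-1}]$ is a fibrant replacement in $\Cat_{\mathrm{loc}}$ — is the composite $X\to N\tau_1X\to N(\tau_1X[(\tau_1X)^{-1}])$, i.e. the first Postnikov map of $X$, hence a local weak equivalence.

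The substantive part of this is the construction and analysis of $\Cat_{\mathrm{loc}}$: identifying its homotopy function complexes with nerves of maximal subgroupoids of functor categories, computing the $j$-local objects, and verifying that the groupoid reflection is a trivial cofibration into a local object. The simplicial side is textbook, and the comparison of the two then reduces to the short arguments above — once one has noticed the one genuine pitfall, namely that $\tau_1\dashv N$ acquires the Quillen property only after passing to the localizations.
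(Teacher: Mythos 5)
Your proof is correct and follows essentially the same strategy as the paper: construct the two left Bousfield localizations, identify the local objects and the localization functors ($\mcD\mapsto\mcD[\mcD^{-1}]$ and $X\mapsto P_1X$), and verify that $\tau_1\dashv N$ becomes a Quillen equivalence only after localizing, via the identification of $N$ of the groupoid reflection of $\tau_1X$ with the first Postnikov section. The only differences are cosmetic: you localize at $[1]\to\mathbb{I}$ and at $\partial\Delta^3\to\Delta^3$ where the paper uses $\tau_1$ of the two outer horn inclusions and all $\partial\Delta^n\to\Delta^n$ with $n>2$ (yielding the same model structures), you spell out the identification of the local objects of $\Cat$ that the paper declares ``easy to verify,'' and you check the derived unit directly where the paper reduces to the counit using that $\tau_1$ preserves and reflects local equivalences.
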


\begin{proof}
Using \cite{Hirschhorn}, the categorical model structure on $\Cat$ is left Bousfield localized with respect to $\tau_1\horn20 \to \tau_1\Delta^2$ and $\tau_1\horn22 \to \tau_1\Delta^2$, the images under $\tau_1$ of the two outer horn inclusions. Similarly, the Quillen model structure is localized with respect to $\partial\Delta^n \to \Delta^n$ for all $n>2$.

The first two points are easy to verify. For the last point, we observe that the localization of $\tau_1X$ is the fundamental groupoid $\Pi_1X$, i.e.\ $\tau_1 \simeq \Pi_1$, and thus, $\tau_1$ preserves and reflects local equivalences. Consequently, it is enough to verify that the derived counit is a weak equivalence. The counit $\tau_1N \to \Id$ is known to be an isomorphism and on local objects, it is weakly equivalent to the derived counit since all simplicial sets are cofibrant.
\end{proof}

\vskip 20pt
\vfill
\vbox{\footnotesize%
\noindent\begin{minipage}[t]{0.45\textwidth}
{\scshape Luk\'a\v{s} Vok\v{r}\'inek}\\
Department of Mathematics and Statistics,\\
Masaryk University,\\
Kotl\'a\v{r}sk\'a~2, 611~37~Brno,\\
Czech Republic
\end{minipage}}


\begin{thebibliography}{0}

\bibitem{Ford}
F.~J.~Ford.
A Mayer--Vietoris sequence for modules over a small category.
\emph{Comm.~Algebra}, \textbf{23} (1995), 3977--3982.

\bibitem{Hatcher}
A.~Hatcher.
\emph{Algebraic topology}.
Cambridge University Press, Cambridge, 2001.

\bibitem{Hirschhorn}
P.~S.~Hirschhorn.
\emph{Model categories and their localizations}.
Mathematical Surveys and Monographs {\bf 99},
American Mathematical Society, Providence, RI, 2003.

\bibitem{Pirashvili}
T.~Pirashvili, M.~J.~Redondo.
Cohomology of the Grothendieck construction.
\emph{Manuscr.~Math.}, \textbf{120} (2006), 151–-162.

\bibitem{Thomason:cat_sset}
R.~W.~Thomason.
$Cat$ as a closed model category.
\emph{Cah. Topol. G\'eom. Diff\'er. Cat\'eg.}, \textbf{21} (1980), 305–-324.

\bibitem{Thomason:hocolim}
R.~W.~Thomason.
Homotopy colimits in the category of small categories.
\emph{Math. Proc. Cambridge Philos. Soc.}, \textbf{85} (1979), 91--109.

\end{thebibliography}
\end{document}